\newtheorem{theorem}{Theorem}
\newtheorem{cor}[theorem]{Corollary}
\newtheorem{example}[theorem]{Example}
\newcommand{\X}{{\mathcal{X}}}
\newcommand{\I}{{\mathcal{I}}}
\newcommand{\XI}{{\X(I)}}
\newcommand{\x}{{\pmb{x}}}
\begin{document}

\title{A Framework for Data-driven Explainability in Mathematical Optimization}

\author[1]{Kevin-Martin Aigner}
\author[2]{Marc Goerigk}
\author[3]{Michael Hartisch}
\author[1]{Frauke Liers}
\author[1]{Arthur Miehlich}

\affil[1]{Department of Data Science, Friedrich-Alexander-Universität Erlangen-Nürnberg, Germany}

\affil[2]{Business Decisions and Data Science, University of Passau, Germany}
\affil[3]{Network and Data Science Management, University of Siegen, Germany}

\date{}

\maketitle

\begin{abstract}
Advancements in mathematical programming
have made it possible to efficiently tackle large-scale real-world
problems that were deemed intractable just a few decades ago. However,
provably optimal solutions may not be accepted due
to the perception of optimization software as a black box. Although
well understood by scientists, this lacks easy accessibility for practitioners.
Hence, we advocate for introducing the explainability of a solution as
another evaluation criterion, next to its objective value, which enables us
to find trade-off solutions between these two criteria.
Explainability is attained by comparing against (not necessarily optimal)
solutions that were implemented in similar situations in the past.
Thus, solutions are preferred that exhibit
similar features. Although we prove that
already in simple cases the explainable model is NP-hard, we
characterize relevant polynomially solvable cases such as the
explainable shortest path problem. Our numerical experiments on
both artificial as well as real-world road networks show the
resulting Pareto front. It turns out that the cost of enforcing explainability can be very small. 
\end{abstract}

\section{Introduction}

\subsection{Motivation}

The area of mathematical optimization plays a vital role in decision making across many fields, from  enhanced process designs, making efficient use of scarce resources, to aids in financial planning.
For example, transportation and logistics benefit from optimized routes and deliveries, and modern challenges in energy management and in environmental planning can be addressed.

The last decades have witnessed remarkable improvement with respect to efficient algorithms and implementations of optimization methods, making it possible to solve huge optimization tasks to provably global optimality within short time.
On the one hand, this is true for polynomial-time solvable optimization problems where modern algorithm engineering approaches have significantly reduced solution times in practice.
On the other hand, significant progress has also been made in enhancing branch-and-bound and branch-and-cut algorithms for the global solution of challenging, NP-hard, mathematical optimization problems.
These improved algorithms employ advanced insights into the structure of the underlying problems, sophisticated strategies to efficiently explore the solution space and effectively prune infeasible or non-optimal branches.
As a result, nowadays even large instances can be solved to global optimality within reasonable solution time.

Until now, the research focus has primarily been on obtaining solutions that can be shown to be provably optimal.
In contrast, in particular when human beings are involved in the resulting decision-making process, it is also of crucial importance to understand the solution itself.
In the realm of problem-solving, the explainability of a solution can actually be similarly important as, or even more important than, its optimality.
While achieving optimal results is undoubtedly important, the ability to understand and interpret a solution is equally vital.
Indeed, gaining insights and comprehending the underlying decisions can lead to improved transparency and thus to improved acceptance and trust in an obtained solution.

Despite this fact, the explainability of mathematical programming models and algorithms has hardly received any attention in the past.
One reason for this might be that experts in the field view
mathematical optimization models
as inherently interpretable, as they appear to be already transparent and easily comprehensible.
Furthermore, many algorithms for solving or approximating solutions of
such models are well established, being the result of several decades of successful algorithmic
developments. Hence, researchers in operations
research and optimization often have high confidence in the procedures. 

Obtained solutions themselves are typically not
understandable, and one can easily wonder why some particular solution has been
obtained. This is even more true for non-experts that have to realize
the optimized solution or are affected by the solution: planners have
to justify the allocation of capacities in a network, factory workers
want to understand why they have to take a disliked shift, airports
need to explain to airlines why some incoming or
outgoing aircraft had to be delayed, and solar panel owners want (and might even have the right to) an explanation of why the power feed of their solar panels was throttled.
Providing the used optimization model without extensive explanations
or pointing at correctness proofs of algorithms is certainly not
enough to provide satisfactory explanations, to build trust or to promote accountability. 

Our work contributes a general
data-driven framework for explainability of solutions in mathematical
optimization together with an analysis of the computational
complexity. We make it concrete for explainable
shortest paths. Computational results show the applicability of the novel framework. 

\subsection{Our Contributions}

We review the current state-of-the-art for explainability in AI and optimization in Section~\ref{sec:literature}.
We then show how to extend a given mathematical optimization model
by additional terms that explicitly model the explainability of a solution in
Section~\ref{sec:model}.
Adding explainability as a second criterion in the objective function,
the explainable model can interpolate between optimality and explainability. 
Utilizing today's availability of vast amount of data, we address
explainability by incorporating similarity-based explanations into the optimization framework.
We use easily comprehensible features of instances and solutions of
past observations to explain why a similar
solution should be employed. 
This way we are not explaining what aspects of the solution make it a ``good'' solution, but provide explanations for particular instance-solution pairs, exploiting the availability of historic data.
This moves the emphasis from provably optimal solutions to an
explanation in the decision-making process: having an explanation at hand might be more (or just as) valuable as being able to prove the solution's optimality. Note that we assume a set of comprehensible features as part of our framework's input, without exploring the specifics of what renders a feature comprehensible.
Although we prove in Section~\ref{sec:discussion} that straight-forward explainable optimization
models are often NP-hard, we show that for a polynomial optimization
problem its explainable version remains
polynomially solvable in case features are modeled by a set of 'important'
elements in a solution. This is in particular true
for the explainable shortest path problem with features represented by a
specific subset of edges. 
We present computational experiments using artificial and real-world data for the shortest path problem in Section~\ref{sec:exp}.
Our results demonstrate that enforcing an explainable path only
increases its cost by a small amount. The paper is concluded in Section~\ref{sec:conclusions}, where we also point out future research directions.

\section{Related Literature}
\label{sec:literature}

\paragraph{Explainability in AI.}
In the context of machine learning the question of human-understandable, explainable artificial intelligence (AI) is heavily discussed in the literature.
Especially for deep learning, methods for visualizing, explaining and interpreting the models and outputs has recently attracted increasing attention, see e.g.~the book by \citet{explainable_ai_book_deep_learning} or the survey by \citet{explainable_ai_survey}.

Recent trends include formal explanations \citep{marques2022delivering}, data-driven methods \citep{li2020survey} and counterfactual analysis \citep{counterfactual_explain}.
By understanding the contribution of specific features of the input, domain experts and practitioners can better comprehend how the AI system arrives at specific predictions and feature-based explainability techniques offer insights into relevant input features that support the system prediction \citep{explainable_deep_learning_lrp, explainable_deep_learning_lrp_code}. 
Hence there is a need to decide whether a specific feature contributes to a solution \citep{huang2023solving}. While interpretable methods are gaining popularity \cite{rudin2019stop}, the pursuit of providing easily understandable explanations continues to be an essential focus of research. Nevertheless, there is an inevitable price of explainability when forcing the solution to be easily comprehensible \citep{dasgupta2020explainable,gupta2023price}.

\paragraph{Explainability in Optimization.}

The NP complexity class allows for checking the feasibility and quality of solutions in polynomial time.
This property enables easy comparison between different solutions, known as contrastive explanation~\citep{miller2021contrastive}.
However, this type of explanation assumes that the inquirer has an alternative solution in mind.
In complex domains, it is often unlikely that such an alternative solution exists, which raises the question of why a particular solution was chosen.
Furthermore, we typically address this question by referring to the specific model, algorithm, or heuristic used, but this may not be satisfactory for non-experts.

In the field of operations research and management science, recent approaches have been published regarding explainability in scheduling and planning.
\citet{collins2019towards} propose approximate post-hoc argumentation-based explanations for planning problems by extracting and abstracting causal relationships.
\citet{oren2020argument} present a tool that utilizes formal argumentation and dialogue theory to explain plans to non-technical users. \citet{vcyras2019argumentation} address explainability for makespan scheduling problems by using an abstract argumentation to extract explanations related to feasibility and efficiency.
This approach is expanded by~\citet{vcyras2021schedule}, resulting in a tool that provides interactive explanations in makespan scheduling.
In a more general approach, \citet{erwig2021explainable} introduce a method for explaining solutions obtained through dynamic programming, focusing on the explainability of the program itself.
Additionally, in the realm of multi-objective optimization, there is a growing emphasis on providing comprehensible insights.
\citet{sukkerd2018toward} generate verbal explanations in a planning environment that clarify the trade-offs made to reconcile competing objectives. \citet{corrente2021explainable} employ simple decision rules to record decision makers' preferences and iteratively converge towards the best compromise solution on the Pareto front, providing insights into the impact of given answers.
Furthermore, an explainable interactive multi-objective optimization method that supports decision makers in expressing new preferences to improve desired objectives was introduced by~\citet{misitano2022towards}.

Lately, there has been a growing interest in data-driven approaches that aim to improve the explainability and interpretability of optimization problems.
\citet{forel2023explainable} provide a counterfactual explanation methodology tailored to explain solutions to data-driven optimization problems.
This way they can explore what would have happened if certain parameters were different from what actually occurred, making it possible to reason about alternative possibilities and hypothetical scenarios to explain the causal relationship between instances and solutions. Furthermore,  
\citet{GOERIGK2023Interpretable} offer a framework providing comprehensible optimization rules that leverage anticipated scenarios, thereby rendering the optimization process inherently comprehensible.

\section{An Optimization Model for Explainable Solutions \label{sec:model}}

\subsection{Explainability: Motivation and Discussion}

The availability of simple optimality certificates
is highly desirable, in particular when human beings need to understand
why some decisions shall be taken or when they have to remember how an
action shall be performed. This can be very relevant in emergency or
safety-critical settings or in situations when time is tight.
It may have legal repercussions when
decision makers need to be able to justify their choices.
Despite its relevance, however,
only for a very limited number of optimization models one can
directly understand why a solution is optimal. We mention a few such
settings next.

For example, let us consider the simple cardinality constrained
minimization problem. For selecting $p$ out of $n$ many items with
minimal cost, it suffices to know the $p$-th smallest cost and
partition the items based on this threshold. 
Furthermore, for optimization problems where some duality theory can be applied and
strong duality holds,
one could use the optimal solution of the dual problem as a
certificate model. For example, for a flow solution in a maximum flow
network, the corresponding minimum cut yields a certificate that the
flow is indeed optimal. Beyond this, there exists a variety of
optimality certificates like variational inequalities or optimality
systems. 
Unfortunately, such certificates are typically as 
complex to understand as the original, primal, solutions and thus
cannot be called an explanation.
This is particularly the case when a solution choice needs to be explained
to somebody without background knowledge in optimization.

As a result, it seems necessary to explicitly model the explainability
criterion as part of an optimization model, as standard models typically
do not yield explainable solutions. 
In the era where data abound, we model the explainability criterion
by extending a standard optimization model by a
data-driven component. Motivated by successful applications in explainable AI, we
rely on the usage of a database of historical instances together with
previous decisions.
We consider a solution explainable
if it differs not too much from accepted historical data.
We
thus use the available data to construct solutions that are similar to the solutions of
the most similar known instances.
The proposed framework thus interpolates between optimality and
explainability.

\subsection{The Framework}

In the following, we write vectors in bold, and use the notation $[n]:=\{1,\ldots,n\}$ to denote index sets.

We study a fixed underlying problem type for which $\I$ is the set of all instances and $\mathcal{X}\subseteq \mathbb{R}^n$ is the general domain. For each instance $I\in \I$, let $\XI \subseteq \mathbb{R}^n$ be the corresponding solution space. 
We call $f^I:\XI \rightarrow \mathbb{R}$ the objective function of $I$   and $f^I(\pmb{x})$ the objective value of $\pmb{x}\in\XI$.
Together they constitute the \emph{nominal} optimization problem
\begin{equation}
	\min_{\x \in \X(I)} f^I(\x). \label{Eq:nom_problem} \tag{Nom}
\end{equation}

A data-driven framework for explainability in general heavily relies on the existence of high quality data consisting of historic instance-solution pairs. Assume we have $N>0$ data points $(I^i,\pmb{x}^i,\lambda^i)$, where $I^i \in \I$ is a full description of the $i$-th historic instance, $\pmb{x}^i\in \mathcal{X}(I^i)$ is the employed solution, and $\lambda^i\in[-1,1]$ is a confidence score that solution $\pmb{x}^i$ is considered optimal in instance $I^i$.
We consider feature functions $\phi_\I:\I\rightarrow F_\I$ and $\phi_\X:\I \times \X \rightarrow F_\X$ 
that aggregate information of instances as well as solutions within features spaces $F_\I \subseteq \mathbb{R}^p$ and $F_\X \subseteq \mathbb{R}^q$, respectively. We define two metrics $d_{\I}:F_\I \times F_\I \rightarrow \mathbb{R}_+$ and $d_{\X}:F_\X \times F_\X \rightarrow \mathbb{R}_+$ as similarity measures for instances and solutions, respectively.

A key assumption is that the used
features indeed represent easily comprehensible aspects of both the
instance and the solution. Due to the subjectiveness of explainability
itself, we cannot quantify this property in general. However,
in the example
below and in the computational results
we provide some possibilities for choosing features. 
We consider a new instance $I \in \I$ for which we want to find an explainable solution. Let $$S_\epsilon(I)=\{i \in [N] \mid d_\I(\phi_\I(I),\phi_\I(I^i)) \leq \epsilon\}$$
be the set containing the most similar historic instances with threshold $\epsilon\geq \min_{i \in [N] } d_\I(\phi_\I(I),\phi_\I(I^i)) $.
For
$\beta \geq 0$, we propose the following bicriteria optimization model:
\begin{equation}
	\min_{\pmb{x} \in \XI} \ \left\lbrace f^I(\pmb{x}), \sum_{i \in S_\epsilon(I)}\frac{\lambda_i d_{\X}\left( \phi_{\X}(I,\x),\phi_{\X}(I^i,\x^i)\right)}{1+\beta d_\I\left(\phi_\I(I),\phi_\I(I^i)\right)}
	\right\rbrace \tag{Exp}\label{exp_formula}
\end{equation}

The first objective is the objective function of the nominal problem. The second objective represents the explainability of solution $\pmb{x}$. To calculate this value, we consider all similar historic instances $S_{\epsilon}(I)$. For each such instance $I^i$, if the confidence $\lambda_i$ is positive, we would like to achieve a solution that is similar to the historic solution $\pmb{x}^i$. Similarity is measured in the solution feature space $F_{\X}$ using the distance $d_{\X}$. If $\lambda_i$ is negative, then there is an incentive to choose a solution that is dissimilar to $\pmb{x}^i$. The factor $\beta$ is used to adjust the influence of less similar historic instances on the explainability.  Our definition of explainability means that a decision maker can point to historic examples, and explain the current choice based on similar situations in which similar solutions were chosen.

In principle, any method to treat bicriteria methods can be applied to calculate Pareto efficient solutions to problem~\eqref{exp_formula}. In the following, we focus on the weighted sum scalarization. In this setting, both objective functions are added together, where different weights are utilized to find different Pareto efficient solutions. If the nominal objective is multiplied with some scalarization weight $\alpha \in (0,1)$ and the
explainability objective with $1-\alpha$, we can define $\tilde{\lambda}_i = \lambda_i / (1+\beta d_\I(\phi_\I(I),\phi_\I(I^i)))$ and thus obtain the following weighted sum problem formulation:
\begin{equation}
	\min_{\pmb{x}\in\X(I)} \alpha f^I(\pmb{x}) + (1-\alpha) \sum_{i\in [N]} \tilde{\lambda}_i d_{\X}(\phi_{\X}(I,\pmb{x}),\phi_{\X}(I^i,\pmb{x}^i)) \tag{WS-Exp}\label{wsexp}
\end{equation}
where $\tilde{\lambda_i} = 0$ for all $i \in [N]\setminus S_\epsilon(I)$.

\begin{example}
	Consider a knapsack problem for packing a suitcase for a vacation. An instance $I$ contains all relevant
	data such as object profits $\pmb{p}\in \mathbb{R}^n_+$, weights
	$\pmb{w}\in \mathbb{R}^n_+$ and the knapsack capacity $C \in \mathbb{R}_+$, but also available metadata. The
	solution space is $\XI=\mathcal{X}(\pmb{p},\pmb{w},C)=\{\pmb{x}\in \{0,1\}^n \mid \sum_{j\in[n]} w_jx_j \leq C\}$, i.e., for each object $j$ it
	is decided whether $j$ is included in the knapsack ($x_j=1$)
	or not ($x_j=0$).
	The optimization objective  is
	$f^I(\x)=-\sum_{j\in[n]} p_j x_j$.
	Possible instance features could be
	the knapsack capacity $C$, profits and weights of particular (or all) items, as well as information on metadata such as weather forecast, season, trip duration or type of vacation (skiing, hiking, beach, etc.). Possible solution features might be the
	number of packed items of specific item groups (number of snacks, shoes, pants ...), the overall number of packed items, or the overall weight of items.
\end{example}

\section{Model Discussion\label{sec:discussion}}

The computational complexity of~\eqref{wsexp} depends on the
difficulty of the nominal optimization problem~\eqref{Eq:nom_problem},
the specific choices of the feature function~$\phi_{\X}$ and the
metric~$d_\X$ on the solution feature space~$\X$. 
In this section, we present some insights into the
computational complexity of the explainable optimization framework.

Unfortunately, even very simple cases of finding explainable solutions by solving the optimization problem~\eqref{wsexp} can be hard.

\begin{theorem}\label{th:hardness}
	Problem~\eqref{wsexp} is NP-hard and not approximable, even if $N=1$, $\phi_{\X}$ is an affine linear function that maps to $\mathbb{R}$, $d_{\X}$ is the absolute difference, $f^I(\pmb{x})=0$, and $\X(I)=\{0,1\}^n$.
\end{theorem}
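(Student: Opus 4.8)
The plan is to reduce from the NP-complete \textsc{Subset Sum} problem. First I would unwind the objective of~\eqref{wsexp} under the stated assumptions. With $N=1$ and $f^I\equiv 0$, the weighted-sum objective collapses to the single term $(1-\alpha)\,\tilde\lambda_1\,d_\X\!\bigl(\phi_\X(I,\x),\phi_\X(I^1,\x^1)\bigr)$. Since $\phi_\X$ maps into $\mathbb{R}$ and $d_\X$ is the absolute difference, and since $\phi_\X(I^1,\x^1)$ is a fixed scalar $c$ while $\phi_\X(I,\x)=\pmb{a}^\top\x+b$ is affine in $\x$, the problem over $\X(I)=\{0,1\}^n$ reduces to
\[
\min_{\x\in\{0,1\}^n}\ \bigl|\pmb{a}^\top\x+b-c\bigr|,
\]
up to the positive constant factor $(1-\alpha)\tilde\lambda_1$, which I would make positive by taking, say, $\lambda_1=1$, $\beta=0$, and any $\alpha\in(0,1)$. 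Scaling by this positive constant changes neither the minimizer nor whether the optimum is zero, so it is irrelevant to hardness.

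Second, I would encode an arbitrary \textsc{Subset Sum} instance, given by positive integers $a_1,\dots,a_n$ and a target $T$, directly into this reduced form: set the feature coefficients to $\pmb{a}=(a_1,\dots,a_n)$, the offset $b=0$, and the historic feature value to $c=T$. This is clearly a polynomial-time transformation. The optimum of the constructed~\eqref{wsexp} instance then equals $\min_{\x\in\{0,1\}^n}\bigl|\sum_{j} a_j x_j - T\bigr|$, which is $0$ precisely when some subset of the $a_j$ sums to $T$. Hence a polynomial-time algorithm for~\eqref{wsexp} would decide \textsc{Subset Sum}, establishing NP-hardness.

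Third, for the inapproximability claim I would exploit that the optimum equals $0$ on yes-instances. Because all data are integers, on a no-instance the smallest attainable absolute value is at least $1$, so the optimum is at least $1$ there. Consequently, any polynomial-time multiplicative $\rho$-approximation, for any finite $\rho$, would return a value $\le \rho\cdot 0 = 0$ on yes-instances and a value $\ge 1$ on no-instances; inspecting whether the returned value is zero would therefore decide \textsc{Subset Sum} in polynomial time. This is impossible unless P$\,=\,$NP, which yields the claimed total inapproximability.

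The step I expect to demand the most care is not the combinatorial reduction, which is clean, but making precise that an affine linear $\phi_\X$ into $\mathbb{R}$ genuinely realizes arbitrary integer subset-sum coefficients while conforming to the model's definition of a solution feature function, and articulating the ``threshold-at-zero'' phenomenon that simultaneously drives both the NP-hardness and the inapproximability. Everything else amounts to routine bookkeeping of the positive scalar constants $(1-\alpha)$ and $\tilde\lambda_1$.
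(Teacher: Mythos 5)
Your proposal is correct and follows essentially the same route as the paper, which reduces from \textsc{Partition} (a special case of \textsc{Subset Sum} with target $A/2$ encoded as the offset of the affine feature map rather than as the historic feature value); both arguments rest on the same ``optimum is zero iff yes-instance'' threshold phenomenon for NP-hardness and inapproximability alike. The only cosmetic difference is your choice of \textsc{Subset Sum} with a general target $T$ in place of \textsc{Partition}.
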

\begin{proof}
	Consider the partition problem, which is known to be NP-complete \citep{garey1979computers}: Given integers $a_1,\ldots,a_n$, is it possible to determine a set $S\subseteq[n]$ such that $\sum_{j\in S} a_j = \sum_{j \notin S} a_j$?
	Given an instance of the partition problem, set $\phi_{\X}(I,\pmb{x}) = \sum_{j\in[n]} a_j x_j - A/2$, where $A=\sum_{j\in[n]} a_j$.
	We assume that there is one historic instance $I^1$ given with $\phi_{\X}(I^1,\pmb{x}^1) = 0$ and $\lambda_1=1$ (note that it is trivial to construct such an instance).
	Then, the problem
	\[ \min_{\pmb{x}\in\{0,1\}^n} (1-\alpha)\tilde{\lambda}_1 \left|\sum_{j\in[n]} a_jx_j - A/2\right| \]
	has a solution with objective value equal to zero if and only if the partition instance is a yes-instance.
\end{proof}

While Theorem~\ref{th:hardness} shows that \eqref{wsexp} is hard already in simple situations, there is a class of problems where it is possible to retain polynomial solvability, if the nominal problem can be solved in polynomial time. We assume that the set of feasible solutions $\X(I)$ contains binary vectors and does not depend on the instance, and write $\X$ instead. Let $c^I_j$ be the costs of item $j\in[n]$ in instance $I$.

We consider a specific type of feature mapping $\phi_{\X}$ that is
natural in this case: Let us assume that there is a set of important
elements $E'\subseteq [n]$. For example, this could be a set of bridges that span a
river in a shortest path problem that is well-known to be polynomial-time
solvable.
It is also possible to define $E'=[n]$ to include complete solutions in this comparison.
We define the feature space $F_{\X}$ to be $\{0,1\}^{|E'|}$, that is, $\phi_{\X}$ is a mapping where for every element in $E'$, we use an indicator binary value to represent if a solution $\pmb{x}$ contains this element or not. We refer to this mapping as $\chi_{E'}(\pmb{x})$. Let $H: \{0,1\}^{|E'|} \times \{0,1\}^{|E'|} \to \mathbb{N}$ denote the Hamming distance.

\begin{theorem}\label{th:poly}
	Problem
	\begin{equation}
		\min_{\pmb{x}\in\X} \alpha \sum_{j\in [n]} c^I_j x_j + (1-\alpha) \sum_{i\in[N]}\tilde{\lambda}_i H(\chi_{E'}(\pmb{x}),\chi_{E'}(\pmb{x}^i)) \tag{WS-Exp$^H$}\label{spexp}
	\end{equation}
	can be solved in polynomial time, if the nominal problem can be solved in polynomial time for arbitrary cost values.
\end{theorem}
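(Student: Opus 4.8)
The plan is to exploit the binary structure of the problem: over $\{0,1\}$-valued variables, the Hamming distance to each \emph{fixed} historic indicator vector $\chi_{E'}(\x^i)$ is an affine function of $\x$, so the entire explainability term collapses into a linear form plus a constant. This reduces \eqref{spexp} to the nominal problem with modified costs, which is solvable in polynomial time by hypothesis.

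First I would rewrite the Hamming distance componentwise. Since $\chi_{E'}(\x)$ simply reads off the coordinates $x_j$ for $j\in E'$, we have $H(\chi_{E'}(\x),\chi_{E'}(\x^i)) = \sum_{j\in E'} |x_j - x^i_j|$. For a constant $x^i_j\in\{0,1\}$ and a variable $x_j\in\{0,1\}$, the absolute difference linearizes as
\[ |x_j - x^i_j| = x^i_j + (1-2x^i_j)\,x_j, \]
as is verified by the two cases $x^i_j=0$ and $x^i_j=1$.

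Next I would substitute this identity, exchange the order of summation, and split off the part that does not depend on $\x$. Writing $\gamma_j := \sum_{i\in[N]} \tilde{\lambda}_i (1-2x^i_j)$ for each $j\in E'$, the explainability term equals $\sum_{j\in E'}\gamma_j x_j$ up to the additive constant $\sum_{i\in[N]}\tilde{\lambda}_i \sum_{j\in E'} x^i_j$, which is irrelevant for the minimization. Combining with the nominal cost term yields the single linear objective $\sum_{j\in[n]} \tilde{c}_j x_j$, where $\tilde{c}_j = \alpha c^I_j$ for $j\notin E'$ and $\tilde{c}_j = \alpha c^I_j + (1-\alpha)\gamma_j$ for $j\in E'$. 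This is precisely an instance of the nominal problem over the same feasible set $\X$ with cost vector $\tilde{\pmb{c}}$; computing all $\tilde{c}_j$ takes $O(N|E'|)$ time, so by the assumed polynomial-time nominal solver the whole procedure runs in polynomial time.

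The step I would flag as the crux is not any single calculation but the hypothesis it forces: the coefficient $1-2x^i_j$ is negative whenever $x^i_j=1$, and $\tilde{\lambda}_i$ may itself be negative (for disliked or dissimilar historic instances with $\lambda_i<0$). Hence $\tilde{c}_j$ can be negative even when the original costs $c^I_j$ are nonnegative. This is exactly why the theorem requires solvability for \emph{arbitrary} cost values rather than merely nonnegative ones---for example, in the shortest path case one must be able to cope with negative arc weights on the underlying network. Making this dependence on arbitrary costs explicit, and confirming that the reduction preserves the feasible set $\X$ unchanged, is the heart of the argument.
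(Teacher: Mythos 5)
Your proof is correct and follows essentially the same route as the paper: you linearize each $|x_j - x^i_j|$ using the binarity of $x_j$ and the fixed $x^i_j$, collect the coefficients into a modified cost vector (your $\gamma_j$ is exactly the paper's $\mu_j$), and invoke the nominal solver on arbitrary costs. Your explicit remark on why negative modified costs force the ``arbitrary cost values'' hypothesis is a useful clarification the paper leaves implicit.
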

\begin{proof}
	We note that
	\begin{align*}
		&\alpha \sum_{j\in [n]} c^I_j x_j + (1-\alpha) \sum_{i\in[N]}\tilde{\lambda}_i H(\chi_{E'}(\pmb{x}),\chi_{E'}(\pmb{x}^i)) \\
		&= \alpha\sum_{j\in [n]} c^I_j x_j + (1-\alpha)\sum_{i\in[N]} \tilde{\lambda}_i \sum_{j\in E'} |x_j - x^i_j| \\
		&= \alpha\sum_{j\in [n]} c^I_j x_j \\
		& \quad + (1-\alpha) \sum_{i\in[N]} \tilde{\lambda}_i \left( \sum_{j\in E': x^i_j = 0} x_j +
		\sum_{j\in E': x^i_j = 1} (1-x_j) \right)
	\end{align*}
	For all $j\in [n]$, let
	\[ \mu_j := \begin{cases} \sum_{i\in[N] : x^i_j=0} \tilde{\lambda}_i - \sum_{i\in[N]: x^i_j = 1} \tilde{\lambda}_i & \text{ if } j\in E' \\
		0 & \text{ else }\end{cases}\]
	With this notation, the objective function thus becomes $\sum_{j\in [n]} (\alpha c^I_j+(1-\alpha)\mu_j)x_j + \sum_{i\in[N]} (1-\alpha)\tilde{\lambda}_i \cdot |\{j\in E' : x^i_j = 1\}|$. As the nominal problem with arbitrary costs can be solved in polynomial time, the claim follows.
\end{proof}

Let us consider in more detail the shortest path problem. There,
a directed graph $G = (V, E)$ with vertices $V$ and
edges $E\subseteq V\times V$ with non-negative edge weights
$c_{u,v} \geq 0$ for $(u, v) \in E$ are given. 
The task consists in determining a cost-minimimal path between two
vertices $s,t\in V$.

\begin{cor}
	Problem~\eqref{spexp}
	can be solved in polynomial time for the shortest path problem in directed acyclic graphs and for the minimum spanning tree problem.
\end{cor}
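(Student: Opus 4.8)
The plan is to obtain the corollary as a direct application of Theorem~\ref{th:poly}. Both the shortest path problem and the minimum spanning tree problem are minimizations of a linear cost function $\sum_{j\in[n]} c^I_j x_j$ over a binary solution set $\X$ that does not depend on the instance (edge-incidence vectors of $s$-$t$ paths, respectively of spanning trees), so both fit the framework of~\eqref{spexp}. It therefore suffices to verify the single hypothesis of Theorem~\ref{th:poly}, namely that each nominal problem can be solved in polynomial time \emph{for arbitrary cost values}.

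This ``arbitrary cost'' clause is the only substantive point, and it is where the directed acyclic restriction enters. In the reduction underlying Theorem~\ref{th:poly} the original weights $c^I_j$ are replaced by the modified weights $\alpha c^I_j + (1-\alpha)\mu_j$, and since the confidence scores $\lambda_i$---and therefore the $\tilde{\lambda}_i$ and the derived $\mu_j$---may be negative, these modified weights can be negative even though the original edge costs are non-negative. I therefore cannot merely cite Dijkstra's algorithm or the non-negative-weight guarantee; I must argue polynomial solvability under possibly negative weights.

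For the minimum spanning tree problem this is immediate: Kruskal's or Prim's algorithm computes a minimum spanning tree in polynomial time for any real edge weights, the underlying greedy/matroid argument being insensitive to the sign of the weights. Thus the hypothesis of Theorem~\ref{th:poly} holds and~\eqref{spexp} is polynomially solvable in this case.

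For the shortest path problem the sign of the weights is decisive, which is exactly why the statement restricts to directed acyclic graphs. Negative weights in a general directed graph can create negative cycles, which render the shortest (simple) path problem NP-hard, so here the acyclicity is not cosmetic. In a DAG, however, I would fix a topological order of the vertices and evaluate the dynamic-programming recursion $\mathrm{dist}(v)=\min_{(u,v)\in E}\{\mathrm{dist}(u)+c_{u,v}\}$ in that order; this runs in linear time and remains correct for arbitrary, possibly negative, edge costs precisely because no cycle can occur. With the hypothesis of Theorem~\ref{th:poly} verified in both settings, the corollary follows. The main thing to watch is exactly this coupling between the negative $\mu_j$ contributions and the graph structure, and I would state the DAG assumption explicitly for that reason.
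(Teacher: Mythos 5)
Your proposal is correct and matches the paper's (implicit) argument: the corollary is stated as a direct consequence of Theorem~\ref{th:poly}, and the substantive point is exactly the one you identify, namely that the modified costs $\alpha c^I_j+(1-\alpha)\mu_j$ may be negative, so one needs nominal solvers that tolerate arbitrary sign---topological-order dynamic programming for DAGs and Kruskal/Prim for spanning trees. Nothing further is needed.
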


In particular the shortest path problem will be studied in more detail in the computational section. We note that the problem still remains hard for general directed graphs $G=(V,E)$.

\begin{theorem}\label{th:sphard}
	Let $\X$ be the set of all $s$-$t$ paths. Problem~\eqref{spexp} is NP-hard and not approximable, even if $\tilde{\lambda}_i \ge 0$, $c^I_e=0$, $|E'|=1$ and $N=1$.
\end{theorem}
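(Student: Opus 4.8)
The plan is to reduce from the directed two-vertex-disjoint-paths problem (2-DDP), which asks, for a directed graph $G$ and four distinct terminals $s_1,t_1,s_2,t_2$, whether there exist two vertex-disjoint directed paths, one from $s_1$ to $t_1$ and one from $s_2$ to $t_2$. This problem was shown to be NP-complete by Fortune, Hopcroft, and Wyllie, and for undirected graphs it is polynomial, so the directedness in \eqref{spexp} is essential. The motivation for this choice is that, under the restrictions of the theorem, the objective of \eqref{spexp} collapses: with $N=1$, $c^I_e=0$, and $E'=\{e^\star\}$ a single edge, and after fixing $\chi_{E'}(\x^1)=1$ together with $\tilde\lambda_1=1$, the objective reduces to $(1-\alpha)\,H(\chi_{E'}(\x),\chi_{E'}(\x^1)) = (1-\alpha)(1-x_{e^\star})$. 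Minimizing this is exactly the problem of deciding whether some simple $s$-$t$ path uses the designated edge $e^\star$, and it is precisely this ``route through a fixed edge while staying simple'' requirement that encodes vertex-disjointness.

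First I would perform the construction. Given a 2-DDP instance, I build $G'$ from $G$ by adding the edge $e^\star=(t_1,s_2)$ together with a bypass edge $(s_1,t_2)$, and set the source and sink to $s:=s_1$ and $t:=t_2$. I let $\X$ be the set of all (simple) $s$-$t$ paths in $G'$, set all edge costs to zero, take $N=1$, $\tilde\lambda_1=1\ge 0$, and $|E'|=1$ with $E'=\{e^\star\}$. Since only $\chi_{E'}(\x^1)$ enters the objective, it suffices to take any historic solution with this single indicator equal to $1$ (e.g.\ the solution of a trivial historic instance containing $e^\star$), so that $\chi_{E'}(\x^1)=1$. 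All parameters then satisfy the restrictions in the statement, and the bypass edge guarantees $\X\neq\emptyset$, so the instance is always feasible.

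Second I would establish the equivalence. A simple $s_1$-$t_2$ path that uses $e^\star$ must have the form $s_1\rightsquigarrow t_1 \to s_2 \rightsquigarrow t_2$; deleting $e^\star$ yields two subpaths $s_1\rightsquigarrow t_1$ and $s_2\rightsquigarrow t_2$ that are vertex-disjoint by simplicity, hence a 2-DDP solution, and conversely two vertex-disjoint paths $P_1,P_2$ concatenate through $e^\star$ into a simple $s$-$t$ path since their vertex sets are disjoint and the terminals are distinct. Consequently the optimum of \eqref{spexp} equals $0$ iff some simple path uses $e^\star$ iff the 2-DDP instance is a yes-instance; otherwise every feasible path has $x_{e^\star}=0$ and the optimum equals $(1-\alpha)>0$ for any chosen $\alpha\in(0,1)$. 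NP-hardness follows, and, exactly as in the proof of Theorem~\ref{th:hardness}, inapproximability follows because any finite-ratio approximation would have to return value $0$ on yes-instances and could thus decide 2-DDP in polynomial time.

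The hard part will be the bookkeeping of the path-to-disjoint-paths correspondence: I must verify that simplicity of the constructed $s$-$t$ path is genuinely equivalent to vertex-disjointness of $P_1$ and $P_2$ (handling coincidences among the terminals and the two added edges), and confirm that the bypass edge $(s_1,t_2)$ cannot open an unintended route through $e^\star$ — which holds because reaching $t=t_2$ terminates any simple path, so the bypass can only realize the trivial two-vertex path. Once these checks are in place, everything else is immediate from the collapse of the objective to $(1-\alpha)(1-x_{e^\star})$.
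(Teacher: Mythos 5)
Your reduction is exactly the paper's: reduce from the directed 2-vertex-disjoint-paths problem of Fortune--Hopcroft--Wyllie by adding the edge $(t_1,s_2)$ with historic indicator $1$, zero costs, and $\tilde\lambda_1=1$, so that the optimum is $0$ iff a simple $s_1$-$t_2$ path through that edge exists iff the instance is a yes-instance. The only difference is your extra bypass edge $(s_1,t_2)$ guaranteeing feasibility, which is a harmless (and slightly more careful) addition; the argument is correct.
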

\begin{proof}
	Consider the 2-vertex-disjoint path problem, which is NP-complete, see \cite{fortune1980directed}. Given a directed graph $G=(V,E)$ with nodes $s_1,s_2$ and $t_1,t_2$, is there an $s_1$-$t_1$ path and an $s_2$-$t_2$ path which do not share any vertex? Given an instance of this problem, we construct an instance of \eqref{spexp}. We set $\bar{G}=(V,\bar{E})$ with $\bar{E} = E \cup \{\bar{e}\}$ with $\bar{e}=\{(t_1,s_2)\}$. Set $x^1_e=0$ for all $e\in E$ and $x^1_{\bar{e}} = 1$. Further set $c^I_e = 0$ for all $e\in\bar{E}$ and $\tilde{\lambda}_1 = 1$. Then there is an $s_1$-$t_2$ path in $\bar{G}$ with costs 0 if and only if the 2-vertex-disjoint path instance is a yes-instance (recall that a path may not repeat a vertex or an edge).
\end{proof}

The hardness of Theorem~\ref{th:sphard} can be avoided if we allow paths to re-visit nodes and edges. Recall that a sequence of consecutive nodes and edges is called an edge progression. As before, we define the Hamming distance as the sum of binary indicators if an edge is present in one edge progression but not the other.

\begin{theorem}
	Let $\X$ be the set of $s$-$t$ edge progressions. Problem~\eqref{spexp} can be solved in polynomial time if $|E'|$ is constant and $G$ does not contain negative cycles with respect to $\pmb{c}^I$.
\end{theorem}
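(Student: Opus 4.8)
The plan is to exploit that $|E'|$ is constant so that the explainability term can be decoupled from the routing by enumerating which important edges are covered. Observe first that the objective of~\eqref{spexp} behaves differently here than in Theorem~\ref{th:poly}: an edge progression may traverse an edge several times, whereas the Hamming distance only records whether an edge is \emph{present}. Hence we cannot fold the explainability term into modified edge costs as before, since a doubly-traversed important edge would be charged twice. Instead, note that the second sum depends on the progression $\pmb{x}$ only through the binary vector $\chi_{E'}(\pmb{x})\in\{0,1\}^{|E'|}$, of which there are only $2^{|E'|}$. I would therefore enumerate all of them.

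For each such vector, equivalently each subset $T\subseteq E'$ of important edges to be covered, the explainability contribution $g(T):=(1-\alpha)\sum_{i\in[N]}\tilde{\lambda}_i H(\chi_T,\chi_{E'}(\pmb{x}^i))$ is a constant, computable in $O(N|E'|)$ time. It then remains to determine, for each $T$, the minimum nominal length $\ell(T)$ of an $s$-$t$ edge progression that traverses every edge of $T$ and no edge of $E'\setminus T$; the optimum of~\eqref{spexp} equals $\min_{T\subseteq E'}\bigl(\alpha\,\ell(T)+g(T)\bigr)$, where infeasible $T$ contribute $\ell(T)=\infty$ and are discarded. To forbid the edges of $E'\setminus T$, I would simply delete them and work in the subgraph $G'_T=(V,E\setminus(E'\setminus T))$, which inherits the absence of negative cycles.

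Computing $\ell(T)$ is the routing core: a minimum-length $s$-$t$ walk in $G'_T$ constrained to use the constant-size set $T=\{e_1,\ldots,e_m\}$. I would enumerate all $m!$ orders $\pi$ in which these edges are first traversed and, for each, concatenate shortest paths: from $s$ to the tail of $e_{\pi(1)}$, across $e_{\pi(1)}$, from the head of $e_{\pi(1)}$ to the tail of $e_{\pi(2)}$, and so on, finishing with a shortest path from the head of $e_{\pi(m)}$ to $t$. Because $G'_T$ contains no negative cycle, each shortest path is well defined, equals the minimum length of \emph{any} walk between its endpoints, and is computable by Bellman--Ford in polynomial time. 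Since $|E'|$ is constant, the total number of shortest-path computations $2^{|E'|}\cdot|E'|!\cdot(|E'|+1)$ is constant, yielding an overall polynomial running time.

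The step I expect to require the most care is the correctness of the concatenation, namely that $\ell(T)=\min_\pi(\text{length of the }\pi\text{-concatenation})$. The inequality ``$\le$'' is immediate, since each concatenation is a feasible $s$-$t$ walk whose presence vector is exactly $\chi_T$: the inserted edges cover all of $T$, while the deletion of $E'\setminus T$ forbids the remaining important edges. For ``$\ge$'', I would take an optimal walk $W$, read off the order $\pi^\star$ of the first traversals of the edges of $T$ along $W$, and split $W$ at these traversals; each resulting segment is a walk in $G'_T$ between two prescribed endpoints and so is no shorter than the corresponding shortest path, whence $\mathrm{length}(W)$ dominates the $\pi^\star$-concatenation. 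The absence of negative cycles is precisely what validates this splitting argument and guarantees that all minima are attained.
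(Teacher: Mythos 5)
Your proposal is correct and follows essentially the same route as the paper's proof: enumerate the $2^{|E'|}$ subsets of important edges (which fix the explainability contribution), delete the forbidden important edges, enumerate the orderings of the required ones, and stitch together Bellman--Ford shortest paths, all polynomial because $|E'|$ is constant. If anything your write-up is the more careful one --- you delete $E'\setminus T$ rather than $E\setminus E'$ (the latter, as written in the paper, is evidently a slip), and you supply the splitting argument establishing $\ell(T)=\min_\pi(\text{length of the }\pi\text{-concatenation})$, which the paper leaves implicit.
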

\begin{proof}
	Enumerate all subsets of $E'$. For each subset, we remove the edges $E\setminus E'$ from the graph and enumerate all possible permutations of elements. As $|E'|$ is constant, there is a constant number of such permutations. For each permutation $((s_1,t_1),\ldots,(s_k,t_k))$, calculate a shortest path with respect to $\pmb{c}^I$ from each $t_i$ to $s_{i+1}$, as well as from $s$ to $s_1$ and from $t_k$ to $t$. Because there are no negative cycles, this can be done in polynomial time. Concatenate these paths with the edges of the permutation to find an $s$-$t$ edge progression. By taking the minimum over all such edge progressions for all permutations of a subset, we find the minimum cost $s$-$t$ edge progression with respect to $\pmb{c}^I$ that includes the elements of the edge subset. By repeating the process for each possible subset of $E'$, we find a minimizer of the cost function of \eqref{spexp}.
\end{proof}

Let us end with some remarks on the relation between~\eqref{Eq:nom_problem} and its explainable version~\eqref{wsexp}.
As argued before, the
nominal~\eqref{Eq:nom_problem} has to be extended by additional
terms in the objective function for addressing the solution explainability.
Theorem~\ref{th:sphard} states that, already for easy settings, \eqref{wsexp} is NP-hard, even if a polynomial-time solution algorithm is known for~\eqref{Eq:nom_problem}.
Nevertheless, for specific optimization problems, it may very well be true that its explainable version~\eqref{wsexp} is an
algorithmically tractable problem as well.
This is true, for example, for continuous but convex optimization problems~\eqref{Eq:nom_problem}, where $\tilde{\lambda}_i\ge 0$ and $\phi_{\X}, d_{\X}$ are given by convex functions.

\section{Computational Experiments}\label{sec:exp}

We illustrate our framework by making it concrete for the 
shortest path problem.
For a directed graph $G = (V, E)$ with edge weight $c_{u,v}\geq 0$ for
edge $(u,v)\in E$, the task consists of determining a shortest path
from some node $s\in V$ to a node $t\in V$ with respect to the weights that model e.g. travel times between two connected nodes.
In addition, each edge $(u, v) \in E$ also has a
(fixed) non-negative length
$l_{u, v} \geq 0$ that models e.g. the geographical distance of two nodes in a street network.
As instance features for a given shortest path instance $I\in \I$ we consider
the edge weights $(c_{u,v})_{(u,v) \in E_{\I}}$  of a particular set of edges $E_{\I}\subseteq E$ together with the $s$-$t$ pair of the instance.
The similarity score $d_\I^i\coloneqq d_\I(\phi_\I(I),\phi_\I(I^i))$ between the current instance $I$ and an instance $I^i$ from the data set is set to infinity, if they have different $s$-$t$  pairs, and to the Euclidean distance of the edge weight vectors in case of equal $s$-$t$ pairs (for this reason, we conduct all experiments using a fixed $s$-$t$ pair).
For $(u,v)\in E$, let variable $x_{u,v}\in \{0,1\}$ indicate whether $(u, v)\in E$ is part of the path or not.
As solution features we choose $\phi_{\X}(I,\x)=(l_{u,v}\cdot x_{u,v})_{(u,v)\in E_{\X}}$ for a subset $E_{\X}\subseteq E$ and measure $d_{\X}$, the similarity of solutions, by calculating the Manhattan distance regarding these edges (using this definition instead of the (unweighted) Hamming distance ensures that results are less dependant on the graph representation).

Following Theorem
\ref{th:hardness}, the explainable shortest path
problem in graphs that contain cycles is NP-hard. We thus next
introduce a mixed-integer linear programming (MIP) formulation of \eqref{wsexp}.
Let $\pmb{x}^i \in\{0,1\}^E$ be a binary vector, indicating the shortest path of
similar instance $i \in S_\epsilon(I)$. 
In our experiments, we set the confidence score $\lambda_i$ and the scale-factor $\beta$
to a constant value of one.
Hence, $\tilde{\lambda}_i\coloneqq(1+d_\I^i)^{-1}$ for $i \in S_\epsilon(I)$ to weigh the influence according to the similarity of the instances. 
Furthermore, we always consider the five most similar historic instances, i.e., we use $|S_\epsilon(I)|=5$ (other settings were tested in preliminary experiments, which resulted in qualitatively similar results).
The model for explainable shortest paths~\eqref{wsexp} then reads
\begin{align*}
	\min \quad &\alpha\sum_{(u, v) \in E} c_{u, v} \cdot x_{u,v}\\ &+(1-\alpha)\sum_{i\in S_\epsilon(I)}\tilde{\lambda}_i\sum_{(u, v) \in E_{\X}} {l_{u,v}}|x_{u,v}^i - x_{u,v}| \\
	\text{ s.t. }\quad&\sum_{v \in V} x_{s,v} = 1, \quad	\sum_{v \in V} x_{v,t} = 1, \\
	&	\sum_{v \in V} x_{u,v} - \sum_{v \in V} x_{v,u} = 0 \qquad \forall\  u \in V \setminus \{s, t\}, \\
	&	t_v\ge t_u+1+(|V|-1)(x_{u,v}-1)\quad \forall\  u,v \in
	V\setminus \{s,t\},\\
	&	t_i\ge0\qquad \forall i\in V,\\
	&	x_{u,v} \in \{0, 1\} \quad \forall (u, v) \in E.  \tag{Exp-MIP}\label{Exp-Path}
\end{align*}
The first three sets of equations model a path in
the network as a flow of value one from $s$ to $t$. Using the
classical
model by \citet{miller1960integer}, the subsequent set of variables and
inequalities enforce the solution path to be necessarily cycle-free.
By sequentially numbering  each node
of the path it is ensured that each node is visited at most once.
We note that the absolute value in the objective function can be easily resolved by a case distinction for the values of the parameters $x_{u,v}^i\in \{0,1\}$.

We call the first sum of the objective function of model \eqref{Exp-Path} the \textit{optimality
	value} and the second sum \textit{explainability value}.
We note
that in this definition, a small value means high explainability.

We next present numerical
experiments on an illustrative synthetic example as well as on a
real-world network with historical data.
All computations were carried out using a \textsc{Python} implementation on a machine with an AMD Ryzen 5 5600H 3.30~GHz processor and 16 GB RAM.
We utilized \textsc{Gurobi 10.0.1} as MIP solver~\cite{gurobi}.

\subsection{Explainable Shortest Paths for a Synthetic Testcase}

\begin{figure}[htb]
	\centering
	\includegraphics[width=.7\columnwidth]{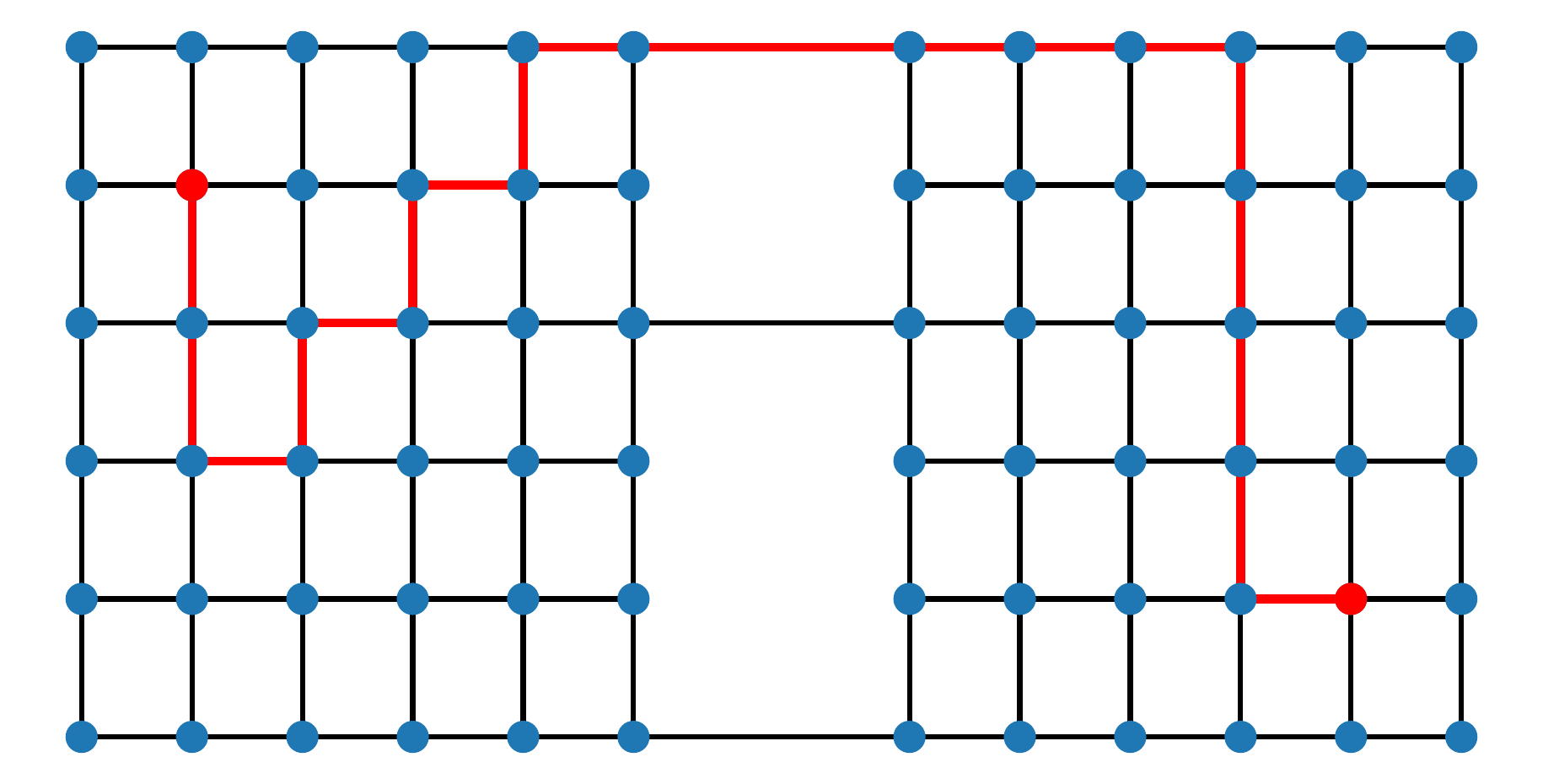}
	\caption{Regular grid graph with three bridges. A shortest
		path (red) connects the two red nodes.}
	\label{fig:graph_bridge}
\end{figure}

In order to illustrate explainable optimization, we study a setting
comprising two cities or two districts within some city that are interconnected by routes or bridges.
The graph used in our experiments is displayed in
Figure~\ref{fig:graph_bridge}. It consists of two $6\times6$ - grids
connected by three edges that represent bridges between them. The
shortest path connecting the two red nodes is drawn in red. 
The main motivation for this synthetic testcase is that the three connecting edges are well suited to define instance and solution features because one has to pass at least one of them to get from the left side to the right side of the graph.

We define random nominal edge weights with a uniformly distribution between 0 and 2.
These values thereby representing the mean travel time of the edges.
We also use these values for the distance parameters $l_{u,v}$ to construct solution features.
For the historical set of instances, we created $N=50$ instances by
varying the edge weights given the topology from Figure~\ref{fig:graph_bridge}.
To imitate the influence of different traffic situations and to
account for real-world variability, we perturbed the nominal edge
weights using a normal distribution with a standard deviation of
$\sigma$. In preliminary computational experiments, it turned out that
for very
small values $\sigma<0.5$, the resulting instances are very similar, whereas for
large values $\sigma > 5$ the weights are almost uncorrelated. We thus
use an intermediate value of
$\sigma = 2$ which yields interesting results.

We restrict the instance feature function $\phi_\I(I)= (c_{u,v})_{(u,v) \in E_{\I}}$ on the subset $E_{\I}\subseteq E$ that consists of the three bridge edges in the middle of Figure~\ref{fig:graph_bridge}. As solution features we consider all edges, i.e.~ $E_{\X}=E$ and hence the similarity of solutions depends on all used edges.

We calculate the Pareto front using a range
of $\alpha$ values spanning from 0 to 1.
The front is given by the corresponding optimality as well as the explainability
value for
each chosen value of $\alpha$.
We repeat the entire process 50 times and compute a
relative optimality score ($\frac{\text{optimality value}}{\text{best
		optimality value}}$) and relative explainability score
($\frac{\text{lowest explainability value}}{\text{explainability}}$).
Plotting these two scores for
each value of $\alpha$ against each other, we generate an average Pareto curve as in Figure~\ref{fig:pareto_synthetic}.
This average curve reflects the overall numerical results across different
traffic scenarios.

Per definition, an optimality value of 1.0 equals the optimum of the nominal
problem. Although the
nominal solution does not take explainability into account, on average
(bold curve) it already obtains a relative explainability value of around 0.7. By
allowing only a small increase in the value of the (nominal)
objective,
explainability improves strongly. Indeed, a high 
relative explainablity value of more than 0.9 is obtained by increasing the
objective value by only around 20\%. 

Additionally we examine the case where  $E_{\X}=E_{\I}$, i.e.~the similarity of solutions is only measured across the three bridge edges.
This only leads to an average relative reduction in optimality of $10\%$ in order to obtain fully explainable solutions.
The corresponding Pareto fronts predominantly consist (with very few exceptions) of only a single point, as the nominal optimal solution frequently exhibited the same features as the most similar data points. Here we should note that features need to be selected with great care: one can construct solution features such as ``an edge leaving node $s$ was used'' that can explain any solution perfectly,  yet lack any significance.

\begin{figure}[htb]
	
	\centering	
	\includegraphics[width=0.5\columnwidth]{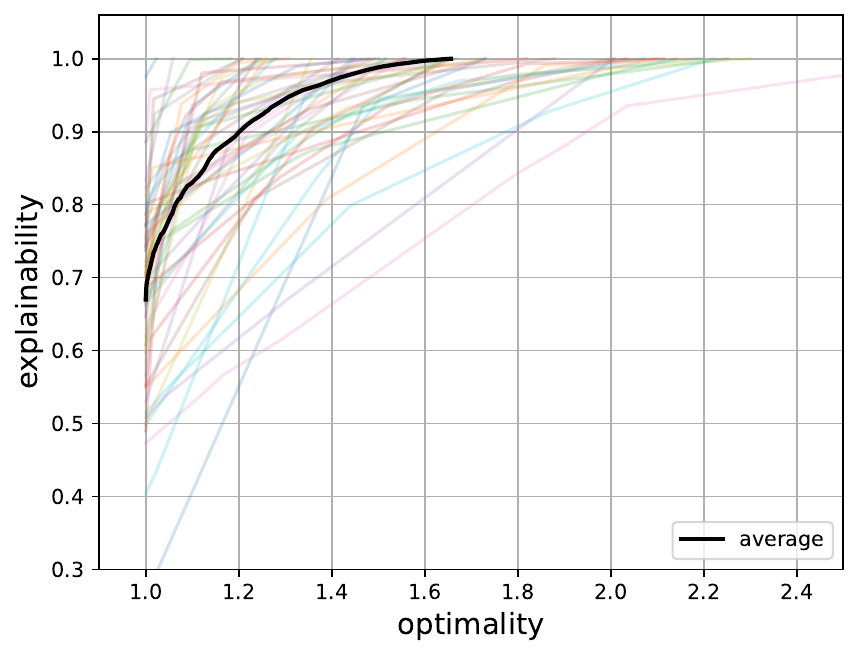}
	\caption{Pareto plot illustrating the relative scale of
		averaged results from 50 runs.
		The bold curve represents average values over
		all instances.}
	\label{fig:pareto_synthetic}
	
\end{figure}

\subsection{Explainable Shortest Path for a Real Street Network with
	Historical Data}

\begin{figure}[htb]
	
	\centering	
	\includegraphics[width=0.4\columnwidth]{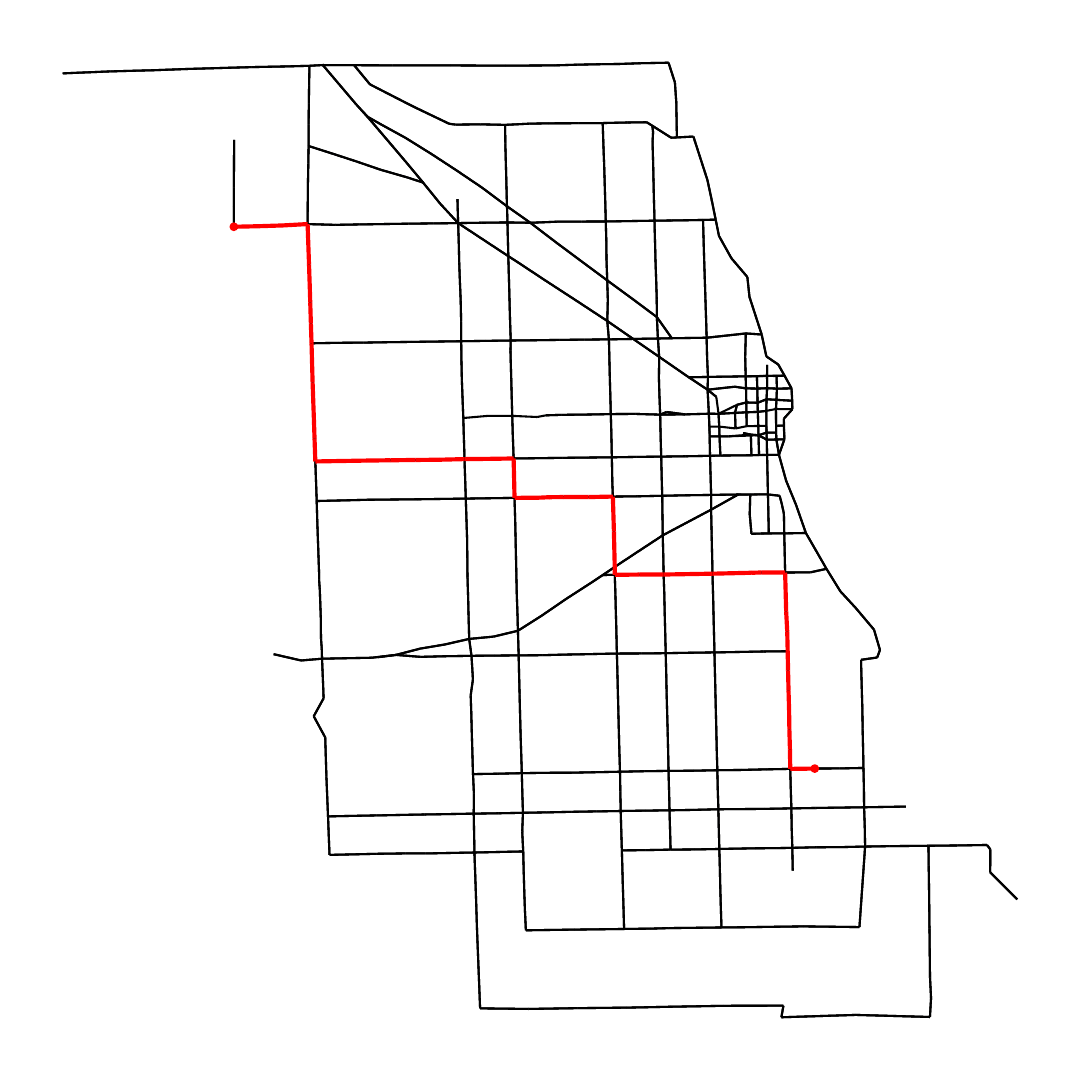}
	\caption{Graph of a real road network in Chicago. The 
		path between two red nodes is drawn in red.}
	\label{fig:graph_Chicago}
	
\end{figure}

For this case study, we utilize real-world data collected by bus
drivers from the city of Chicago, see
\cite{chassein2019algorithms}. By this real-world case study, we aim to gain deeper insights into
the evaluation of explainability versus optimality under realistic traffic scenarios.
The network topology consists of
538 nodes and 1287 edges, where each edge is associated with the coordinates of its start and end points. These nodes and edges represent the road network in the city.
We use 4363 different historical scenarios for the (average) edge
velocities that we set as edge weights.
The data was collected from March $28^{th}$ until May $12^{th}$, 2017,
where velocity data is recorded in time steps in 15-minute
intervals. This detailed data provides valuable insights into the
traffic patterns. We note that the data is not
complete for the entire duration (in total we are missing 67 time steps). This does not impact our calculations, as we isolate individual data points and never consider the continuous timeline itself in our analysis.
We solve the shortest path problem for a randomly selected instance. 
By utilizing the date and time information as additional instance features, we filter the
historical data taking solely velocities measured 
within a time window of up to 30 minutes before or after the current
daytime. 
This narrows down the dataset to about 170
relevant historical instances that closely align with the present
conditions.
For defining the instance and solution features, we use $E_{\I}=E_{\X}=E$.
Then we proceed by repeatedly solving \eqref{Exp-Path} for the individual values for $\alpha$ as we did 
in the synthetic testcase.
To obtain an average Pareto curve, we optimize again 50 randomly selected instances drawn from the dataset.
Solving the nominal shortest path problem can be performed very
quickly ($<$0.1s). But also for one choice of $\alpha$, the solution of \eqref{Exp-Path}
takes at most a couple of seconds even for our real-world instances, which is an affordable time period in many
practical settings.

\begin{figure}[htb]
	
	\centering	
	\includegraphics[width=0.5\columnwidth]{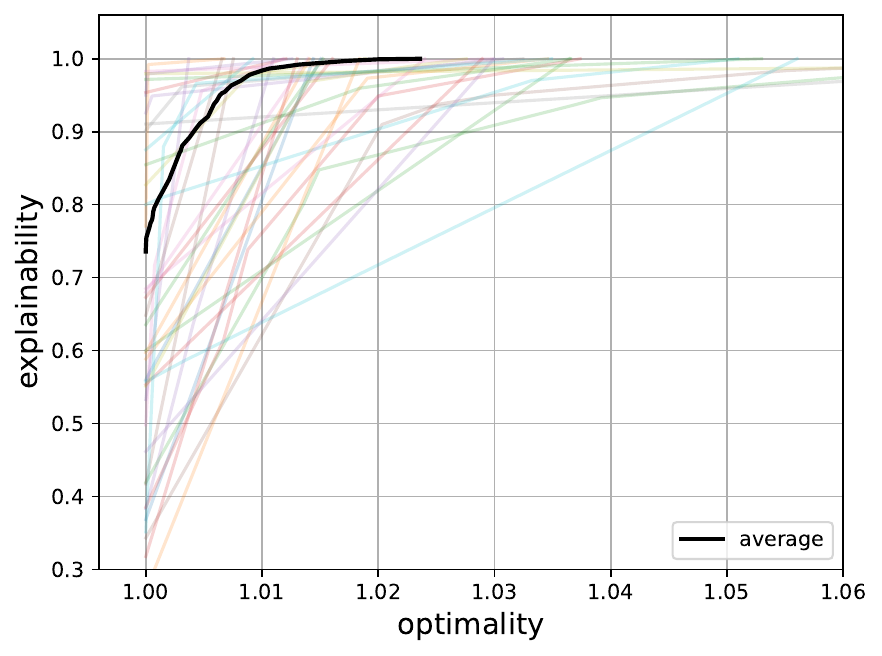}
	\caption{Analysis of 50 random samples explained by 4363 pre-filtered instances, aggregated by daytime, compared with 5 historical solutions.}
	\label{fig:pareto_Chicago}
	
\end{figure}

Similarly as in the synthethic grid network, 
the nominal solution on average
(bold curve) already obtains an explainability value of around
0.75. Again, only slightly increasing the value of the (nominal)
objective,
the corresponding value improves even more drastically than in the
synthetic case. Indeed, a fully explainable solution is
obtained by allowing an increase in objective value by less than
3\%. In summary, the synthetic together with the real-world case study
show that enriching the shortest path problem by explainability is 
practically doable. In addition, in our computations, the objective
values increase by only a small amount, showing that only a tiny
prize has to be paid for obtaining an explainable solution.

\section{Conclusions and Outlook}
\label{sec:conclusions}
We presented and theoretically substantiated a general, data-driven
framework that incorporates explainability into mathematical
optimization. A solution is considered explainable
when it shares similar features with solutions from similar favorable
instances and deviates in features from similar unfavorable
instances. Using artificial and real-world case studies on the explainable shortest path problem, we have demonstrated that solutions can be computed quickly with only a very small price of explainability.
Presumably, plots for
explainability versus optimality will exhibit similar characteristics
also for other optimization problems.

We only used historic favorable solutions as explanations. Although this leads to data-driven explainability, such solutions may not be considered fair when applied over and over again. This can happen in shift planning when always the same personnel is assigned to unpopular shifts. Obtaining fair solutions over time, for example in ressource allocation, has been studied in  \cite{lodifairness}. Furthermore, in machine learning efforts to bridge the gap between fairness and explainability were made \cite{zhao2023fairness}. 
There's also a trend on discovering diverse solutions \cite{arrighi2023synchronization}, which offers different choices at various stages, which can be used to uphold fairness. Exploring fairness issues using the explainable framework presented here will be explored in future research and can be addressed by appropriately adapting parameters $\lambda_i$ in model \eqref{exp_formula} such that similarities in unpopular or recent situations is avoided.

\section*{Acknowledgements}
The authors thank the DFG for their support within Projects B06 and
B10 in CRC TRR 154, as well as within Project-ID
416229255 - SFB 1411. 
Furhtermore, this work was supported by the Federal Ministry for Economic Affairs
and Energy, Germany, Grant 03EI1036A.
The authors acknowledge support from Schloss Dagstuhl, Seminar 22441.


\begin{thebibliography}{30}
\providecommand{\natexlab}[1]{#1}

\bibitem[{Arrighi et~al.(2023)Arrighi, Fernau, de~Oliveira~Oliveira, and
  Wolf}]{arrighi2023synchronization}
Arrighi, E.; Fernau, H.; de~Oliveira~Oliveira, M.; and Wolf, P. 2023.
\newblock Synchronization and Diversity of Solutions.
\newblock \emph{Proceedings of the AAAI Conference on Artificial Intelligence},
  37(10): 11516--11524.

\bibitem[{Bach et~al.(2015)Bach, Binder, Montavon, Klauschen, M{\"u}ller, and
  Samek}]{explainable_deep_learning_lrp}
Bach, S.; Binder, A.; Montavon, G.; Klauschen, F.; M{\"u}ller, K.-R.; and
  Samek, W. 2015.
\newblock On pixel-wise explanations for non-linear classifier decisions by
  layer-wise relevance propagation.
\newblock \emph{PloS one}, 10(7): e0130140.

\bibitem[{Carrizosa, Ram{\i}rez-Ayerbe, and
  Morales(2023)}]{counterfactual_explain}
Carrizosa, E.; Ram{\i}rez-Ayerbe, J.; and Morales, D.~R. 2023.
\newblock Mathematical Optimization Modelling for Group Counterfactual
  Explanations.
\newblock Technical report, IMUS, Sevilla, Spain.

\bibitem[{Chassein, Dokka, and Goerigk(2019)}]{chassein2019algorithms}
Chassein, A.; Dokka, T.; and Goerigk, M. 2019.
\newblock Algorithms and uncertainty sets for data-driven robust shortest path
  problems.
\newblock \emph{European Journal of Operational Research}, 274(2): 671--686.

\bibitem[{Collins, Magazzeni, and Parsons(2019)}]{collins2019towards}
Collins, A.; Magazzeni, D.; and Parsons, S. 2019.
\newblock Towards an argumentation-based approach to explainable planning.
\newblock In \emph{ICAPS 2019 Workshop XAIP Program Chairs}.

\bibitem[{Corrente et~al.(2021)Corrente, Greco, Matarazzo, and
  Slowinski}]{corrente2021explainable}
Corrente, S.; Greco, S.; Matarazzo, B.; and Slowinski, R. 2021.
\newblock Explainable interactive evolutionary multiobjective optimization.
\newblock \emph{Available at SSRN 3792994}.

\bibitem[{{\v{C}}yras, Lee, and Letsios(2021)}]{vcyras2021schedule}
{\v{C}}yras, K.; Lee, M.; and Letsios, D. 2021.
\newblock Schedule Explainer: An Argumentation-Supported Tool for Interactive
  Explanations in Makespan Scheduling.
\newblock In \emph{International Workshop on Explainable, Transparent
  Autonomous Agents and Multi-Agent Systems}, 243--259. Springer.

\bibitem[{{\v{C}}yras et~al.(2019){\v{C}}yras, Letsios, Misener, and
  Toni}]{vcyras2019argumentation}
{\v{C}}yras, K.; Letsios, D.; Misener, R.; and Toni, F. 2019.
\newblock Argumentation for Explainable Scheduling.
\newblock \emph{Proceedings of the AAAI Conference on Artificial Intelligence},
  33(01): 2752--2759.

\bibitem[{Dasgupta et~al.(2020)Dasgupta, Frost, Moshkovitz, and
  Rashtchian}]{dasgupta2020explainable}
Dasgupta, S.; Frost, N.; Moshkovitz, M.; and Rashtchian, C. 2020.
\newblock Explainable k-means and k-medians clustering.
\newblock In \emph{Proceedings of the 37th International Conference on Machine
  Learning, Vienna, Austria}, 12--18.

\bibitem[{Došilović, Brčić, and Hlupić(2018)}]{explainable_ai_survey}
Došilović, F.~K.; Brčić, M.; and Hlupić, N. 2018.
\newblock Explainable artificial intelligence: A survey.
\newblock In \emph{2018 41st International Convention on Information and
  Communication Technology, Electronics and Microelectronics (MIPRO)},
  0210--0215.

\bibitem[{Erwig and Kumar(2021)}]{erwig2021explainable}
Erwig, M.; and Kumar, P. 2021.
\newblock Explainable dynamic programming.
\newblock \emph{Journal of Functional Programming}, 31.

\bibitem[{Forel, Parmentier, and Vidal(2023)}]{forel2023explainable}
Forel, A.; Parmentier, A.; and Vidal, T. 2023.
\newblock Explainable Data-Driven Optimization: From Context to Decision and
  Back Again.
\newblock arXiv:2301.10074.

\bibitem[{Fortune, Hopcroft, and Wyllie(1980)}]{fortune1980directed}
Fortune, S.; Hopcroft, J.; and Wyllie, J. 1980.
\newblock The directed subgraph homeomorphism problem.
\newblock \emph{Theoretical Computer Science}, 10(2): 111--121.

\bibitem[{Garey and Johnson(1979)}]{garey1979computers}
Garey, M.~R.; and Johnson, D.~S. 1979.
\newblock \emph{Computers and intractability}.
\newblock W. H. Freeman.

\bibitem[{Goerigk and Hartisch(2023)}]{GOERIGK2023Interpretable}
Goerigk, M.; and Hartisch, M. 2023.
\newblock A framework for inherently interpretable optimization models.
\newblock \emph{European Journal of Operational Research}, 310(3): 1312--1324.

\bibitem[{Gupta et~al.(2023)Gupta, Pittu, Svensson, and Yuan}]{gupta2023price}
Gupta, A.; Pittu, M.~R.; Svensson, O.; and Yuan, R. 2023.
\newblock The Price of Explainability for Clustering.
\newblock \emph{arXiv preprint arXiv:2304.09743}.

\bibitem[{{Gurobi Optimization, LLC}(2023)}]{gurobi}
{Gurobi Optimization, LLC}. 2023.
\newblock {Gurobi Optimizer Reference Manual}.

\bibitem[{Huang, Izza, and Marques-Silva(2023)}]{huang2023solving}
Huang, X.; Izza, Y.; and Marques-Silva, J. 2023.
\newblock Solving Explainability Queries with Quantification: The Case of
  Feature Relevancy.
\newblock \emph{Proceedings of the AAAI Conference on Artificial Intelligence},
  37(4): 3996--4006.

\bibitem[{Lapuschkin et~al.(2016)Lapuschkin, Binder, Montavon, M{{\"u}}ller,
  and Samek}]{explainable_deep_learning_lrp_code}
Lapuschkin, S.; Binder, A.; Montavon, G.; M{{\"u}}ller, K.-R.; and Samek, W.
  2016.
\newblock The {LRP} Toolbox for Artificial Neural Networks.
\newblock \emph{Journal of Machine Learning Research}, 17(114): 1--5.

\bibitem[{Li et~al.(2020)Li, Cao, Shi, Bai, Gao, Qiu, Wang, Gao, Zhang, Xue
  et~al.}]{li2020survey}
Li, X.-H.; Cao, C.~C.; Shi, Y.; Bai, W.; Gao, H.; Qiu, L.; Wang, C.; Gao, Y.;
  Zhang, S.; Xue, X.; et~al. 2020.
\newblock A survey of data-driven and knowledge-aware explainable ai.
\newblock \emph{IEEE Transactions on Knowledge and Data Engineering}, 34(1):
  29--49.

\bibitem[{Lodi et~al.(2022)Lodi, Olivier, Pesant, and
  Sankaranarayanan}]{lodifairness}
Lodi, A.; Olivier, P.; Pesant, G.; and Sankaranarayanan, S. 2022.
\newblock Fairness over time in dynamic resource allocation with an application
  in healthcare.
\newblock \emph{Mathematical Programming}, 111--121.

\bibitem[{Marques-Silva and Ignatiev(2022)}]{marques2022delivering}
Marques-Silva, J.; and Ignatiev, A. 2022.
\newblock Delivering Trustworthy AI through Formal XAI.
\newblock \emph{Proceedings of the AAAI Conference on Artificial Intelligence},
  36(11): 12342--12350.

\bibitem[{Miller, Tucker, and Zemlin(1960)}]{miller1960integer}
Miller, C.~E.; Tucker, A.~W.; and Zemlin, R.~A. 1960.
\newblock Integer programming formulation of traveling salesman problems.
\newblock \emph{Journal of the ACM (JACM)}, 7(4): 326--329.

\bibitem[{Miller(2021)}]{miller2021contrastive}
Miller, T. 2021.
\newblock Contrastive explanation: A structural-model approach.
\newblock \emph{The Knowledge Engineering Review}, 36.

\bibitem[{Misitano et~al.(2022)Misitano, Afsar, L{\'a}rraga, and
  Miettinen}]{misitano2022towards}
Misitano, G.; Afsar, B.; L{\'a}rraga, G.; and Miettinen, K. 2022.
\newblock Towards explainable interactive multiobjective optimization: R-XIMO.
\newblock \emph{Autonomous Agents and Multi-Agent Systems}, 36(2): 43.

\bibitem[{Oren, Deemter, and Vasconcelos(2020)}]{oren2020argument}
Oren, N.; Deemter, K.~v.; and Vasconcelos, W.~W. 2020.
\newblock Argument-based plan explanation.
\newblock In \emph{Knowledge Engineering Tools and Techniques for AI Planning},
  173--188. Springer.

\bibitem[{Rudin(2019)}]{rudin2019stop}
Rudin, C. 2019.
\newblock Stop explaining black box machine learning models for high stakes
  decisions and use interpretable models instead.
\newblock \emph{Nature Machine Intelligence}, 1(5): 206--215.

\bibitem[{Samek and M{\"u}ller(2019)}]{explainable_ai_book_deep_learning}
Samek, W.; and M{\"u}ller, K.-R. 2019.
\newblock Towards Explainable Artificial Intelligence.
\newblock In Samek, W.; Montavon, G.; Vedaldi, A.; Hansen, L.~K.; and
  M{\"u}ller, K.-R., eds., \emph{Explainable AI: Interpreting, Explaining and
  Visualizing Deep Learning}, 5--22. Cham: Springer International Publishing.

\bibitem[{Sukkerd, Simmons, and Garlan(2018)}]{sukkerd2018toward}
Sukkerd, R.; Simmons, R.; and Garlan, D. 2018.
\newblock Toward explainable multi-objective probabilistic planning.
\newblock In \emph{2018 IEEE/ACM 4th International Workshop on Software
  Engineering for Smart Cyber-Physical Systems (SEsCPS)}, 19--25. IEEE.

\bibitem[{Zhao, Wang, and Derr(2023)}]{zhao2023fairness}
Zhao, Y.; Wang, Y.; and Derr, T. 2023.
\newblock Fairness and Explainability: Bridging the Gap Towards Fair Model
  Explanations.
\newblock \emph{Proceedings of the AAAI Conference on Artificial Intelligence},
  37(9): 11363--11371.

\end{thebibliography}
\end{document}